\newlength{\standardunitlength}
\newtheorem{prop}{Proposition}[section]
\newtheorem{definition}[prop]{Definition}
\newtheorem{lemma}[prop]{Lemma}
\newtheorem{theorem}[prop]{Theorem}
\begin{document}

\title [Derangements in affine] {Derangements in affine classical groups and Cohen-Lenstra heuristics}

\author{Jason Fulman}
\address{Department of Mathematics, University of Southern California, Los Angeles, CA 90089-2532, USA}
\email{fulman@usc.edu}

\author{Dennis Stanton}
\address{Department of Mathematics, University of Minnesota, Minneapolis, MN 55455, USA}
\email{stant001@umn.edu}

\keywords{derangement, classical group, Cohen-Lenstra heuristic}


\date{October 16, 2025}

\begin{abstract} We observe that Anzanello's work on the proportion of derangements in affine classical groups over finite fields
is related to symplectic and orthogonal Cohen-Lenstra type distributions on integer partitions. This leads
to a proof of three q-polynomial identities conjectured by Anzanello, which were crucial for her work.
\end{abstract}

\maketitle

\section{Introduction} Let $G$ be a finite permutation group acting transitively on a set $X$ of size greater than 1.
An element of $G$ is called
a derangement if it has no fixed points on $X$. It is of great interest to study the proportion of elements of $G$ which are
derangements. Indeed Serre \cite{Se} and Section 8 of \cite{DFG} survey connections of derangements with topology, number
theory, and other parts of mathematics. It is elementary (a theorem of Jordan) that the proportion of derangements of $G$
is positive. It is also known (conjectured by Boston and Shalev and proved in a series of four papers of Fulman and Guralnick)
that if $G$ is simple then the proportion of derangements is bounded away from $0$ by a universal constant. It is also  
of interest to enumerate derangements satisfying specific properties, such as derangements of prescribed order. In fact
an entire book \cite{BG} is devoted to such problems; see Chapter 1 of \cite{BG} for an overview.

Building on work of Pablo Spiga for affine general linear groups, Anzanello \cite{An} studies the proportion of derangements
and derangements of p-power order for finite affine unitary groups and odd characteristic finite affine symplectic and orthogonal
groups. Her results for the unitary groups are complete, but her results for odd characteristic affine symplectic and orthogonal
groups depend on the following three q-polynomial identities which she conjectured (the notation is defined in Section \ref{back}):

\begin{equation} \label{anz1} \sum_{|\lambda|=2m \atop i \ odd \implies m_i(\lambda) \ even} \frac{1-q^{-\lambda_1'}}
{q^{\frac{1}{2} \sum_i (\lambda_i')^2 + \frac{o(\lambda)}{2}} \prod_i (1/q^2;1/q^2)_{\lfloor \frac{m_i(\lambda)}{2} \rfloor}}
 =  \frac{1}{q^m (q+1)} \sum_{i=1}^m \frac{(-1)^{i-1} (q^{2i+1}+1)}{q^{i(i+1)} (1/q^2;1/q^2)_{m-i}}. \end{equation}

\begin{eqnarray} \label{anz2} & &  \sum_{|\lambda|=2m+1 \atop i \ even \implies m_i(\lambda) \ even} \frac{1-q^{-\lambda_1'}}
{q^{\frac{1}{2} \sum_i (\lambda_i')^2 - \frac{o(\lambda)}{2}} \prod_i (1/q^2;1/q^2)_{\lfloor \frac{m_i(\lambda)}{2} \rfloor}}
\end{eqnarray}
 \[  =  \frac{1}{q^m (1/q^2;1/q^2)_m} + \frac{1}{q^{m+1}} \sum_{i=0}^m \frac{(-1)^{i-1}}{q^{i(i+1)} (1/q^2;1/q^2)_{m-i}}.\]

\begin{equation} \label{anz3} \sum_{|\lambda|=2m \atop i \ even \implies m_i(\lambda) \ even} \frac{1-q^{-\lambda_1'}}
{q^{\frac{1}{2} \sum_i (\lambda_i')^2 - \frac{o(\lambda)}{2}} \prod_i (1/q^2;1/q^2)_{\lfloor \frac{m_i(\lambda)}{2} \rfloor}} 
 =  \frac{1}{q^m} \sum_{i=1}^m \frac{(-1)^{i-1}}{q^{i(i-1)} (1/q^2;1/q^2)_{m-i}}. \end{equation}

The purpose of this paper is to prove Conjectures \eqref{anz1}, \eqref{anz2}, \eqref{anz3}, by connecting them to work on
symplectic and orthogonal Cohen-Lenstra type distributions introduced in \cite{F1} (see also \cite{F2} which describes these
using notation similar to that of Anzanello). We find this connection to be interesting (possibly more interesting than the conjectures
themselves) and it would be nice to have a
conceptual reason for it. Cohen-Lenstra heuristics were introduced in \cite{CL} in a number theoretic context and have been
studied by giants (see the surveys \cite{EV},\cite{Wood}). In fact the paper \cite{CL} has 843 google scholar citations at
the current time. Our proof of Conjectures \eqref{anz1}, \eqref{anz2}, \eqref{anz3} also uses some results for
terminating $_2 \phi _1$ basic hypergeometric series.

This paper is organized as follows. The (short) Section \ref{back} collects some notation and gives needed background
on hypergeometric series. Section \ref{Spproof} proves Anzanello's first conjecture by linking it with a symplectic
Cohen-Lenstra distribution on the set of integer partitions which have all odd parts occurring with even multiplicity. Sections
\ref{Oproof1} and \ref{Oproof2} prove Anzanello's second and third conjectures by linking them with an orthogonal
Cohen-Lenstra distribution on the set of integer partitions which have all even parts occurring with even multiplicity.

\section{Notation and terminating $_2 \phi _1$ facts} \label{back}
 
Throughout this paper we use the notation that
\[ (A;Q)_n = (1-A)(1-AQ) \cdots (1-AQ^{n-1}) = \prod_{k=0}^{n-1} (1-AQ^k).\] For $\lambda$ an integer partition,
we let $|\lambda|$ denote the size of $\lambda$, and let $m_i(\lambda)$ denote the number of parts of $\lambda$ of size $i$.
We let $o(\lambda)$ denote the number of odd parts of $\lambda$. We let $\lambda_i' = \sum_{j \geq i} m_j(\lambda)$ (so $\lambda_1'$
is the number of parts of $\lambda$ and $\lambda_i'$ is the size of the ith column of the diagram of $\lambda$).

Next we discuss some facts about terminating  $_2 \phi _1$ series, beginning with a definition.

\begin{definition} For a non-negative integer $n$ let
$$
\ _2\phi_1(Q^{-n},B;C;Q,Z)=
\sum_{k=0}^n \frac{(Q^{-n};Q)_k (B;Q)_k}{(Q;Q)_k (C;q)_k} Z^k
$$ \end{definition}

We need an evaluation of a terminating $\ _2\phi_1$, see \cite[II.7]{GR}.

\begin{prop} 
\label{qchu}
For a non-negative integer $n$, we have
$$
\ _2\phi_1(Q^{-n},B;C;Q,CQ^n/B)= \frac{(C/B;Q)_n}{(C;Q)_n}.
$$
\end{prop}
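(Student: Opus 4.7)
The plan is to derive this classical q-Chu-Vandermonde summation by specializing Heine's transformation for $_2\phi_1$ series and invoking the q-binomial theorem. Heine's transformation reads
\[ \ _2\phi_1(A, B; C; Q, Z) = \frac{(B;Q)_\infty (AZ;Q)_\infty}{(C;Q)_\infty (Z;Q)_\infty} \ _2\phi_1(C/B, Z; AZ; Q, B). \]
Specializing $A = Q^{-n}$ and $Z = CQ^n/B$ one has $AZ = C/B$, so the right-hand side of Heine becomes
\[ \frac{(B;Q)_\infty (C/B;Q)_\infty}{(C;Q)_\infty (CQ^n/B;Q)_\infty} \cdot \ _2\phi_1(C/B, CQ^n/B; C/B; Q, B). \]

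In this transformed $_2\phi_1$ the first numerator parameter coincides with the single denominator parameter and cancels term-by-term, reducing the series to the ${}_1\phi_0$
\[ \sum_{k \geq 0} \frac{(CQ^n/B;Q)_k}{(Q;Q)_k} B^k = \frac{(CQ^n;Q)_\infty}{(B;Q)_\infty}, \]
where the evaluation uses the q-binomial theorem. Substituting back, the $(B;Q)_\infty$ factors cancel; two applications of the telescoping identity $(X;Q)_\infty / (XQ^n;Q)_\infty = (X;Q)_n$ (once with $X = C/B$, once with $X = C$) then collapse the remaining infinite products to $(C/B;Q)_n / (C;Q)_n$, as claimed.

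The main obstacle is that Heine's transformation is usually stated under a convergence hypothesis, so one must justify its use when $A = Q^{-n}$: although the original series terminates, the intermediate ${}_2\phi_1$ on the right of Heine does not. This is handled by viewing both sides as rational functions of $B$ and $C$ that must agree on any nonempty domain where Heine applies, and extending by analytic continuation. Alternatively, one can bypass Heine entirely with a purely finitary induction on $n$: the base $n = 0$ is trivial, and both sides satisfy the first-order recursion $H_n(B,C) = \frac{1-C/B}{1-C}\, H_{n-1}(B, CQ)$, immediate on the right from splitting off the $k=0$ factors of $(C/B;Q)_n$ and $(C;Q)_n$, and on the left following from a short manipulation of the summand after shifting the summation index.
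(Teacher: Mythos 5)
Your argument is correct, but it is worth noting that the paper does not actually prove this proposition at all: it simply cites it as the $q$-Chu--Vandermonde summation \cite[II.7]{GR}, so any proof you give is by definition a different route. What you supply is essentially the standard textbook derivation (it is close to how Gasper--Rahman obtain the $q$-Gauss sum, of which II.7 is the terminating case $A=Q^{-n}$): specialize Heine's transformation so that $AZ=C/B$, collapse the resulting series to a ${}_1\phi_0$ because the upper parameter $C/B$ cancels the lower one, evaluate by the $q$-binomial theorem, and telescope the infinite products. I checked the bookkeeping and it is right: the products combine to $\frac{(C/B;Q)_\infty}{(CQ^n/B;Q)_\infty}\cdot\frac{(CQ^n;Q)_\infty}{(C;Q)_\infty}=\frac{(C/B;Q)_n}{(C;Q)_n}$. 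You are also right to flag the one genuine subtlety, namely that the transformed series does not terminate; your fix is sound, since for $0<Q<1$ the region $|C|<|B|<1$ is a nonempty open set on which both Heine's transformation and the ${}_1\phi_0$ converge, and both sides of the final identity are rational functions of $B$ and $C$, so agreement there forces agreement identically. The alternative induction you sketch also works (the recursion for the right-hand side is immediate, as you say, and the matching contiguous relation for the terminating sum is a standard if slightly tedious verification) and has the advantage of being purely algebraic, with no analytic continuation needed. Either route would be a reasonable self-contained substitute for the paper's bare citation; the citation buys brevity, your proof buys self-containedness at the cost of either an analytic-continuation step or a finite but fiddly induction.
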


We need a transformation of a terminating $\ _2\phi_1$, see \cite[III.7]{GR}.
\begin{prop} 
\label{term2phi1trans}
For a non-negative integer $n$, we have
$$
\ _2\phi_1(Q^{-n},B;C;Q,Z)= \frac{(C/B;Q)_n}{(C;Q)_n} 
\sum_{k=0}^n \frac{(Q^{-n};Q)_k (B;Q)_k (BZQ^{-n}/C;Q)_k}{(Q;Q)_k (BQ^{1-n}/C;Q)_k} Q^k.
$$
\end{prop}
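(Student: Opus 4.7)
The plan is to match coefficients of $Z^j$ on the two sides for each $j=0,\dots,n$. Both sides are polynomials in $Z$ of degree $\leqs n$ (on the right because $(BZQ^{-n}/C;Q)_k$ has degree $k$ in $Z$), so a coefficient-by-coefficient comparison suffices. The LHS coefficient of $Z^j$ reads off immediately as $(Q^{-n};Q)_j(B;Q)_j/[(Q;Q)_j(C;Q)_j]$, so the real work lies in bringing the RHS into this form.

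To extract the RHS coefficient of $Z^j$, I would expand $(BZQ^{-n}/C;Q)_k$ by the finite $q$-binomial theorem $(x;Q)_k=\sum_{j=0}^k \qb{k}{j}(-x)^j Q^{\binom{j}{2}}$, swap the order of summation, and change variables $k=j+m$, using $(a;Q)_{j+m}=(a;Q)_j(aQ^j;Q)_m$. The inner sum over $m$ then becomes a terminating $\ _2\phi_1(Q^{-(n-j)},BQ^j;BQ^{j+1-n}/C;Q,Q)$, which is summable by $q$-Chu-Vandermonde in the form $\ _2\phi_1(Q^{-N},B;C;Q,Q)=(C/B;Q)_N B^N/(C;Q)_N$. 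This form is equivalent to Proposition \ref{qchu}, being obtainable from it by reversing the order of summation, so it is available once Proposition \ref{qchu} is granted.

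What remains is algebraic simplification. Using $(BQ^{1-n}/C;Q)_j(BQ^{j+1-n}/C;Q)_{n-j}=(BQ^{1-n}/C;Q)_n$, the splitting $(C;Q)_n=(C;Q)_j(CQ^j;Q)_{n-j}$, and the reflection identity $(a;Q)_N=(-a)^N Q^{\binom{N}{2}}(Q^{1-N}/a;Q)_N$ applied with $(a,N)=(C/B,n)$ and with $(a,N)=(CQ^j,n-j)$, all Pochhammer symbols and powers of $B,C$ cancel cleanly. The remaining $Q$-exponent comparison reduces to the elementary identity $\binom{n}{2}-\binom{n-j}{2}+\binom{j}{2}=j(n-1)$, which is immediate. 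I expect this last stage to be the main obstacle: not any deeper idea, but merely keeping the bookkeeping of signs, powers of $Q$, and the ratio $(C/B;Q)_n/(BQ^{1-n}/C;Q)_n$ straight through the chain of cancellations.
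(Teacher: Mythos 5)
Your proof is correct, but it is worth noting that the paper does not actually prove this statement at all: it is quoted as \cite[III.7]{GR}, so you have supplied a self-contained argument where the authors rely on a citation. I checked your computation and it goes through. Both sides are indeed polynomials of degree at most $n$ in $Z$; expanding $(BZQ^{-n}/C;Q)_k=\sum_{j=0}^k \qb{k}{j}(-BZQ^{-n}/C)^jQ^{\binom{j}{2}}$, swapping sums, and shifting $k=j+m$ turns the inner sum into $\ _2\phi_1(Q^{j-n},BQ^j;BQ^{j+1-n}/C;Q,Q)$, and the second form of $q$-Chu--Vandermonde, $\ _2\phi_1(Q^{-N},b;c;Q,Q)=b^N(c/b;Q)_N/(c;Q)_N$, is legitimately available since it follows from Proposition \ref{qchu} by reversing the order of summation. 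In the final cleanup the signs and the powers of $B$ and $C$ cancel exactly as you predict, and the residual exponent of $Q$ is $\binom{n}{2}-\binom{n-j}{2}+\binom{j}{2}-nj+j$, which vanishes by your identity $\binom{n}{2}-\binom{n-j}{2}+\binom{j}{2}=j(n-1)$; the coefficient of $Z^j$ on the right collapses to $(Q^{-n};Q)_j(B;Q)_j/[(Q;Q)_j(C;Q)_j]$ as required. The only caveat worth recording is that the evaluation of the inner sum requires $(BQ^{j+1-n}/C;Q)_{n-j}\neq 0$, so the identity should be read for generic $B,C$ (as it must be anyway, since that product already appears in the denominator of the statement). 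Compared with the standard derivations in \cite{GR}, which obtain (III.7) from Heine-type transformations of nonterminating series, your route is more elementary and stays entirely within terminating sums, at the cost of the bookkeeping you describe.
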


This limiting case of Proposition~\ref{term2phi1trans} will be used.

\begin{prop} 
\label{limitterm2phi1trans}
For a non-negative integer $n$, we have
$$
\begin{aligned}
\lim_{B\to\infty} \ _2\phi_1(Q^{-n},B;C;Q,Z/B) &= 
\sum_{k=0} ^n \frac{(Q^{-n};Q)_k (-1)^k Q^{\binom{k}{2}}}{(Q;Q)_k (C;q)_k} Z^k\\
&=\frac{1}{(C;Q)_n} 
\sum_{k=0}^n \frac{(Q^{-n};Q)_k  (ZQ^{-n}/C;Q)_k}{(Q;Q)_k} Q^k (CQ^{n-1})^k.\\
\end{aligned}
$$
\end{prop}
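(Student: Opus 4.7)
The plan is to verify both equalities in Proposition~\ref{limitterm2phi1trans} by direct term-by-term limit calculations; no further identities beyond Proposition~\ref{term2phi1trans} are needed, so the proof is essentially bookkeeping.

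For the first equality I start from the defining series
$$\ _2\phi_1(Q^{-n},B;C;Q,Z/B)=\sum_{k=0}^n \frac{(Q^{-n};Q)_k (B;Q)_k}{(Q;Q)_k (C;Q)_k} (Z/B)^k.$$
The sum is finite since $(Q^{-n};Q)_k=0$ for $k>n$, so passing the limit through the sum is automatic. The only $B$-dependence in the summand sits in the combined factor $(B;Q)_k(Z/B)^k = (B;Q)_k B^{-k}Z^k$, and
$$\lim_{B\to\infty}\frac{(B;Q)_k}{B^k}=\lim_{B\to\infty}\prod_{j=0}^{k-1}\Bigl(\tfrac{1}{B}-Q^j\Bigr)=\prod_{j=0}^{k-1}(-Q^j)=(-1)^k Q^{\binom{k}{2}}.$$
Substituting yields the first claimed expression.

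For the second equality I substitute $Z\mapsto Z/B$ into Proposition~\ref{term2phi1trans}, producing the identity
$$\ _2\phi_1(Q^{-n},B;C;Q,Z/B)=\frac{(C/B;Q)_n}{(C;Q)_n}\sum_{k=0}^n \frac{(Q^{-n};Q)_k (B;Q)_k (ZQ^{-n}/C;Q)_k}{(Q;Q)_k (BQ^{1-n}/C;Q)_k} Q^k,$$
where I have used that the $B$ in the Pochhammer $(BZQ^{-n}/C;Q)_k$ cancels against the $1/B$ in $Z/B$. Now I let $B\to\infty$. The prefactor tends to $1/(C;Q)_n$ since $(C/B;Q)_n\to 1$, and the Pochhammer $(ZQ^{-n}/C;Q)_k$ is already $B$-free. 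The remaining $B$-dependence is the ratio $(B;Q)_k/(BQ^{1-n}/C;Q)_k$, for which the leading-order estimates
$$(B;Q)_k\sim (-B)^k Q^{\binom{k}{2}},\qquad (BQ^{1-n}/C;Q)_k\sim (-B/C)^k Q^{k(1-n)+\binom{k}{2}}$$
give the limit $C^k Q^{k(n-1)}=(CQ^{n-1})^k$. Plugging in produces the second claimed expression.

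No serious obstacle arises: both identities are finite-sum limits of ratios of rational functions of $B$, and the dominant monomials in $B$ cancel cleanly once the exponents of $Q$ are tracked. The only step that warrants a moment of care is the arithmetic of the exponents in the asymptotic expansion of $(BQ^{1-n}/C;Q)_k$, namely $\sum_{j=0}^{k-1}(1-n+j)=k(1-n)+\binom{k}{2}$, which is what makes the surviving $Q$-power in the ratio collapse to the clean form $Q^{k(n-1)}$.
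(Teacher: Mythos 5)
Your proof is correct and follows exactly the route the paper intends: the paper simply asserts the proposition as "this limiting case of Proposition~\ref{term2phi1trans}" without writing out details, and your term-by-term limits $(B;Q)_k/B^k\to(-1)^kQ^{\binom{k}{2}}$ and $(B;Q)_k/(BQ^{1-n}/C;Q)_k\to(CQ^{n-1})^k$ are precisely the omitted bookkeeping. The exponent arithmetic $\sum_{j=0}^{k-1}(1-n+j)=k(1-n)+\binom{k}{2}$ checks out, so nothing is missing.
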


Finally we need the $Q$-binomial theorem, see \cite[II.3]{GR}.
\begin{theorem} 
\label{qbinth}
We have
$$
\frac{(AZ;Q)_\infty}{(Z;Q)_\infty} =\sum_{k=0}^\infty \frac{(A;Q)_k}{(Q;Q)_k} Z^k.
$$
\end{theorem}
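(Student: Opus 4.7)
The plan is to prove the $Q$-binomial theorem by the classical functional-equation argument, working (for concreteness) in the regime $|Q|<1$, so that $f(Z) := (AZ;Q)_\infty/(Z;Q)_\infty$ is an analytic function on $|Z|<1$ with a convergent power series expansion $f(Z) = \sum_{k\geq 0} c_k Z^k$; the goal is to identify the coefficients $c_k$. Since $f(0)=1$ we immediately have $c_0=1$, which will serve as the initial condition.

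First I would exploit the telescoping identities $(Z;Q)_\infty = (1-Z)(QZ;Q)_\infty$ and $(AZ;Q)_\infty = (1-AZ)(AQZ;Q)_\infty$, which follow by splitting off the first factor from each infinite product. Dividing them yields the functional equation $(1-Z)f(Z) = (1-AZ)f(QZ)$. Next I would substitute the power series expansion of $f$ into this functional equation and compare the coefficient of $Z^k$ on each side: the left side contributes $c_k - c_{k-1}$, and the right side contributes $Q^k c_k - AQ^{k-1}c_{k-1}$. Equating them gives the one-step recurrence $(1-Q^k)c_k = (1-AQ^{k-1})c_{k-1}$, which iterates with $c_0=1$ to produce $c_k = (A;Q)_k/(Q;Q)_k$, the claimed formula.

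The main obstacle is really just a bookkeeping caveat about convergence: one must justify treating $f$ as a genuine power series in $Z$. This is not serious, because the two infinite products converge uniformly on compact subsets of $|Z|<1$ when $|Q|<1$, and the quotient is analytic there (the zeros of the denominator lie on $|Z|\geq 1$). Alternatively the entire argument can be carried out in the formal power series ring $\mathbb{Z}[[A,Q,Z]]$, where the telescoping identities, the functional equation, and the recurrence all make sense coefficient by coefficient and uniquely determine the $c_k$. Either viewpoint yields the stated identity.
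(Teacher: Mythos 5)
The paper does not prove this statement at all: it is quoted as a known result with a citation to \cite[II.3]{GR} (Gasper--Rahman), so there is no in-paper argument to compare against. Your proposal supplies the standard textbook proof, and it is correct: the telescoping identities $(Z;Q)_\infty=(1-Z)(QZ;Q)_\infty$ and $(AZ;Q)_\infty=(1-AZ)(AQZ;Q)_\infty$ give the functional equation $(1-Z)f(Z)=(1-AZ)f(QZ)$, and comparing coefficients of $Z^k$ yields $(1-Q^k)c_k=(1-AQ^{k-1})c_{k-1}$, which with $c_0=1$ iterates to $c_k=(A;Q)_k/(Q;Q)_k$. Your convergence remarks are also in order: for $|Q|<1$ both products are entire in $Z$ and the denominator's zeros sit at $Z=Q^{-n}$ with modulus at least $1$, so $f$ is analytic on $|Z|<1$; alternatively everything goes through formally since $(Z;Q)_\infty$ has constant term $1$ and is invertible in the formal power series ring. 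This is exactly the argument one finds in \cite{GR}, so while the paper itself offers no proof, your route is the canonical one and nothing is missing.
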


\section{Anzanello's first conjecture} \label{Spproof}

The paper \cite{F1} (see also Section 4 of \cite{F2} which uses notation similar to that of Anzanello), defined and studied the following
probability distribution on integer partitions for which all odd parts occur with even multiplicity:

\[ P_{Sp}(\lambda) = \prod_{i \geq 1} (1-u^2/q^{2i-1}) \frac{u^{|\lambda|}}{q^{\frac{1}{2} \sum_i (\lambda_i')^2 + \frac{o(\lambda)}{2}}  
\prod_i (1/q^2;1/q^2)_{\lfloor \frac{m_i(\lambda)}{2} \rfloor}}.\] 

In order to give an algorithm for generating partitions distributed according to $P_{Sp}$, it was proved by a recursive argument in \cite{F1} that

\[ \sum_{\lambda_1'=2k} P_{Sp}(\lambda) =  \prod_{i \geq 1} (1-u^2/q^{2i-1}) \frac{u^{2k}}{q^{2k^2+k} (1/q^2;1/q^2)_k (u^2/q;1/q^2)_k} \] and that
\[ \sum_{\lambda_1'=2k-1} P_{Sp}(\lambda) = \prod_{i \geq 1} (1-u^2/q^{2i-1}) \frac{u^{2k}}{q^{2k^2-k} (1/q^2;1/q^2)_{k-1}
(u^2/q;1/q^2)_{k}}.\]

It follows that \begin{eqnarray*}
& & \sum_{|\lambda|=2m, \lambda_1'=2k \atop i \ odd \implies m_i(\lambda) \ even} \frac{1}
{q^{\frac{1}{2} \sum_i (\lambda_i')^2 + \frac{o(\lambda)}{2}} \prod_i (1/q^2;1/q^2)_{\lfloor \frac{m_i(\lambda)}{2} \rfloor}}\\
& = & \frac{1}{q^{2k^2+k} (1/q^2;1/q^2)_k} \times \rm{Coeff. \ } u^{m-k} \rm{\ in \ } \frac{1}{(u/q;1/q^2)_k}
\end{eqnarray*} and that
 \begin{eqnarray*} & & \sum_{|\lambda|=2m, \lambda_1'=2k-1 \atop i \ odd \implies m_i(\lambda) \ even} \frac{1}
{q^{\frac{1}{2} \sum_i (\lambda_i')^2 + \frac{o(\lambda)}{2}} \prod_i (1/q^2;1/q^2)_{\lfloor \frac{m_i(\lambda)}{2} \rfloor}} \\
& = & \frac{1}{q^{2k^2-k} (1/q^2;1/q^2)_{k-1}} \times \rm{Coeff. \ } u^{m-k} \rm{\ in \ } \frac{1}{(u/q;1/q^2)_k}. \end{eqnarray*}

Thus the left side of Conjecture \eqref{anz1} is equal to $\sum_{k=1}^m (a_k+b_k)$, where
\[ a_k = \frac{1}{q^{2k^2+k} (1/q^2;1/q^2)_{k-1}} \times  \rm{Coeff. \ } u^{m-k} \rm{\ in \ } \frac{1}{(u/q;1/q^2)_k} \]  and 
\[ b_k = \left( 1 - \frac{1}{q^{2k-1}} \right) \frac{1}{q^{2k^2-k} (1/q^2;1/q^2)_{k-1}} \times
 \rm{Coeff. \ } u^{m-k} \rm{\ in \ } \frac{1}{(u/q;1/q^2)_k}.\] Thus to prove the Conjecture \eqref{anz1}, it is necessary to show
that \begin{equation} \label{maybe4} \sum_{k=1}^m (a_k + b_k) = \frac{1}{q^m (q+1)} \sum_{i=1}^m \frac{(-1)^{i-1} (q^{2i+1}+1)}{q^{i(i+1)} (1/q^2;1/q^2)_{m-i}}. \end{equation}

To prove \eqref{maybe4}, we will split the term $a_k+b_k = a2_k + b2_k$ into two other terms, applying the transformation in
Proposition \ref{limitterm2phi1trans} to each new term, and recombine.

\begin{lemma} For $m\ge k$ the coefficient of $u^{m-k}$ in 
$$
\frac{1}{(u/q;q^{-2})_k}
$$
is
$$
\frac{(q^{-2k};q^{-2})_{m-k}}{(q^{-2};q^{-2})_{m-k}} q^{-m+k}.
$$
\end{lemma}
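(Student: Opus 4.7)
The plan is to recognize this as a direct consequence of the $q$-binomial theorem (Theorem~\ref{qbinth}) specialized to a finite Pochhammer denominator. Specifically, setting $A = Q^k$ in Theorem~\ref{qbinth} collapses the infinite product on the left: $(AZ;Q)_\infty/(Z;Q)_\infty = (Q^k Z;Q)_\infty/(Z;Q)_\infty = 1/(Z;Q)_k$. This yields the well-known finite identity
$$\frac{1}{(Z;Q)_k} = \sum_{n \geq 0} \frac{(Q^k;Q)_n}{(Q;Q)_n} Z^n.$$

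I would then substitute $Q = q^{-2}$ and $Z = u/q$ to get
$$\frac{1}{(u/q;q^{-2})_k} = \sum_{n \geq 0} \frac{(q^{-2k};q^{-2})_n}{(q^{-2};q^{-2})_n} (u/q)^n,$$
and read off the coefficient of $u^{m-k}$ by setting $n = m-k$. This produces
$$\frac{(q^{-2k};q^{-2})_{m-k}}{(q^{-2};q^{-2})_{m-k}} q^{-(m-k)},$$
which is exactly the claimed expression.

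There is no real obstacle here: the main thing to verify is that the specialization $A = Q^k$ in the $q$-binomial theorem is legitimate (it is, since it only shortens an infinite product to a finite Pochhammer) and that the substitution $Q = q^{-2}$ causes no convergence issues at the level of formal power series in $u$. Since we only need the coefficient of a single power of $u$, the identity holds as an identity of formal series and the specialization is routine.
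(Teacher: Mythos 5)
Your proof is correct and is essentially identical to the paper's own argument: both apply the $q$-binomial theorem with $A=Q^k$, $Q=q^{-2}$, $Z=u/q$ to get $1/(Z;Q)_k=\sum_{s\ge 0}\frac{(Q^k;Q)_s}{(Q;Q)_s}Z^s$ and then read off the coefficient of $u^{m-k}$. Nothing further is needed.
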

\begin{proof} This follows from the Theorem~\ref{qbinth}, 
$Q=q^{-2}$, $A=Q^k$, $Z=u/q$, $s=m-k,$
$$
\frac{1}{(Z;Q)_k}=\frac{(ZQ^k;Q)_\infty}{(Z;Q)_\infty}=
\sum_{s=0}^\infty \frac{(Q^k;Q)_s}{(Q;Q)_s} Z^s.
$$
\end{proof}

\begin{definition} Fix $m\ge 1.$ For $1\le k\le m$ let
$$
a2_k= (1-q)\frac{q^{-2k^2-k}}{ (q^{-2};q^{-2})_{k-1}}\frac{(q^{-2k};q^{-2})_{m-k}}{(q^{-2};q^{-2})_{m-k}} q^{-m+k}.
$$
\end{definition}

\begin{definition} Fix $m\ge 1.$ For $1\le k\le m$ let
$$
b2_k= \frac{q^{-2k^2+k}}{ (q^{-2};q^{-2})_{k-1}}\frac{(q^{-2k};q^{-2})_{m-k}}{(q^{-2};q^{-2})_{m-k}} q^{-m+k}.
$$
\end{definition}

The next two Propositions give the basic hypergeometric forms
 for the $a2$ and $b2$ sums.

\begin{prop}
\label{a2}
We have
$$
\begin{aligned}
\sum_{k=1}^m a2_k&= q^{-m}(1-q)\sum_{s=0}^{m-1} (-1)^s \frac{(q^{2m-2};q^{-2})_s}{(q^{-2};q^{-2})_s^2}
q^{-s^2-3s-2sm-2}\\
&= q^{-m-2}(1-q)\lim_{b\to\infty} \ _2\phi_1(q^{2m-2},b;q^{-2};q^{-2},q^{-2m-4}/b).
\end{aligned}
$$

\end{prop}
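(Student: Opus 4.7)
The plan is a direct computation in two steps: first convert the defining sum of $\sum_{k=1}^m a2_k$ into the middle form shown in Proposition~\ref{a2}, and then recognize that form as the claimed limit of a terminating ${}_2\phi_1$.

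For the first step, I would reindex by setting $s = k-1$, so that the sum runs over $s = 0, 1, \ldots, m-1$. The key manipulation is rewriting the ratio
$$\frac{(q^{-2s-2};q^{-2})_{m-s-1}}{(q^{-2};q^{-2})_s\,(q^{-2};q^{-2})_{m-s-1}}$$
that appears in $a2_{s+1}$. I would use two elementary q-Pochhammer identities that follow directly from the definition $(A;Q)_n=\prod_{k=0}^{n-1}(1-AQ^k)$: namely
$$(q^{-2s-2};q^{-2})_{m-s-1} = \frac{(q^{-2};q^{-2})_{m-1}}{(q^{-2};q^{-2})_s},$$
obtained by relabeling factors, and the reversal identity
$$(q^{2m-2};q^{-2})_s = (-1)^s\, q^{\,2ms - s^2 - s}\,\frac{(q^{-2};q^{-2})_{m-1}}{(q^{-2};q^{-2})_{m-s-1}},$$
obtained by pulling out $-q^{2m-2-2k}$ from each factor $1-q^{2m-2-2k}$. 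Combining these two identities replaces the ratio by $(-1)^s (q^{2m-2};q^{-2})_s / (q^{-2};q^{-2})_s^2$ times a power of $q$. It then remains to verify that the accumulated exponent of $q$ matches $-s^2-3s-2sm-2$ after factoring out $q^{-m}$, which is a routine check of linear arithmetic on the exponents $-2s^2-4s-2-m$ (from the original $a2_{s+1}$) and $2ms-s^2-s$ (from the reversal).

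For the second step, I would apply Proposition~\ref{limitterm2phi1trans} with the specialization $Q = q^{-2}$, $n = m-1$ (so $Q^{-n} = q^{2m-2}$), $C = q^{-2}$, and $Z = q^{-2m-4}$. The first line of that proposition gives
$$\lim_{b\to\infty}{}_2\phi_1(q^{2m-2},b;q^{-2};q^{-2},q^{-2m-4}/b) = \sum_{k=0}^{m-1} \frac{(q^{2m-2};q^{-2})_k (-1)^k q^{-k(k-1)}}{(q^{-2};q^{-2})_k^2}\,q^{-(2m+4)k},$$
and collecting the exponent of $q$ in the summand yields $-k^2 - 3k - 2km$, precisely matching the middle sum in the proposition. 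Multiplying by the prefactor $q^{-m-2}(1-q)$ and noting that $q^{-m-2}\cdot q^0 = q^{-m}\cdot q^{-2}$ recovers the first expression in Proposition~\ref{a2}.

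The main (and only real) obstacle is bookkeeping of the $q$-exponents across the two steps; no deeper identity beyond Proposition~\ref{limitterm2phi1trans} and the definition of $(A;Q)_n$ is required.
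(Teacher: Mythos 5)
Your proposal is correct: the reindexing $s=k-1$, the two $q$-Pochhammer identities (telescoping and reversal), and the exponent bookkeeping all check out, and the specialization $Q=q^{-2}$, $n=m-1$, $C=q^{-2}$, $Z=q^{-2m-4}$ in Proposition~\ref{limitterm2phi1trans} correctly identifies the middle sum with the stated ${}_2\phi_1$ limit. The paper states Proposition~\ref{a2} without proof, treating it as a routine computation, and your argument is precisely that intended computation carried out in full.
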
 

\begin{prop} We have
\label{b2}
$$
\begin{aligned}
\sum_{k=1}^m b2_k&= q^{-m}\sum_{s=0}^{m-1} (-1)^s \frac{(q^{2m-2};q^{-2})_s}{(q^{-2};q^{-2})_s^2}
q^{-s^2-s-2sm}\\
&= q^{-m}\lim_{b\to\infty} \ _2\phi_1(q^{2m-2},b;q^{-2};q^{-2},q^{-2m-2}/b).
\end{aligned}
$$
\end{prop}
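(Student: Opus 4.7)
The plan is to mirror the strategy used for Proposition~\ref{a2}. I would reindex the sum $\sum_{k=1}^m b2_k$ by setting $s = k-1$, so $s$ runs from $0$ to $m-1$, and then recognize the resulting expression as the limiting $_2\phi_1$ of Proposition~\ref{limitterm2phi1trans} with the parameters $Q = q^{-2}$, $n = m-1$, $C = q^{-2}$, and $Z = q^{-2m-2}$.

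The first step is to simplify the ratio $\frac{(q^{-2k};q^{-2})_{m-k}}{(q^{-2};q^{-2})_{m-k}}$ appearing in $b2_k$. Writing the numerator as $\prod_{i=k}^{m-1}(1-q^{-2i})$, with $k = s+1$ this equals $(q^{-2};q^{-2})_{m-1}/(q^{-2};q^{-2})_s$. Dividing by $(q^{-2};q^{-2})_{m-s-1}$ and then applying the standard reversal
\[
\prod_{i=m-s}^{m-1}(1-q^{-2i}) = (-1)^s q^{-2sm+s+s^2}(q^{2m-2};q^{-2})_s
\]
converts the ratio into $(-1)^s q^{-2sm+s+s^2}(q^{2m-2};q^{-2})_s/(q^{-2};q^{-2})_s$. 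Substituting this back into the definition of $b2_{s+1}$ and combining the various powers of $q$ yields
\[
b2_{s+1} = \frac{(-1)^s (q^{2m-2};q^{-2})_s}{(q^{-2};q^{-2})_s^2}\, q^{-s^2 - s - 2sm - m},
\]
which, after factoring out $q^{-m}$, gives the first equality of the proposition.

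For the second equality, I would apply the first form in Proposition~\ref{limitterm2phi1trans}. With $Q = q^{-2}$, $n = m-1$, $C = q^{-2}$, $Z = q^{-2m-2}$, the summand $\frac{(Q^{-n};Q)_k (-1)^k Q^{\binom{k}{2}}}{(Q;Q)_k (C;Q)_k} Z^k$ becomes precisely $\frac{(-1)^s(q^{2m-2};q^{-2})_s}{(q^{-2};q^{-2})_s^2}q^{-s^2-s-2sm}$, identifying the sum with $\lim_{b\to\infty} {_2\phi_1}(q^{2m-2},b;q^{-2};q^{-2},q^{-2m-2}/b)$.

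The main obstacle is purely clerical: tracking the exponent of $q$ through the reversal identity and ensuring that the constant offset $-m$ in the exponent of $b2_{s+1}$ factors out cleanly, as opposed to leaving an additional $q^{-2}$ (which is what distinguishes $b2$ from $a2$ and accounts for the different outer factors $q^{-m}$ versus $q^{-m-2}$ in the two propositions). Once this bookkeeping is done, no further hypergeometric machinery is required beyond Proposition~\ref{limitterm2phi1trans} itself.
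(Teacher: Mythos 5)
Your computation is correct and is exactly the routine verification the paper intends (the paper states Proposition~\ref{b2} without an explicit proof): reindexing by $s=k-1$, applying the reversal $\prod_{i=m-s}^{m-1}(1-q^{-2i})=(-1)^s q^{s^2+s-2sm}(q^{2m-2};q^{-2})_s$, and matching the summand of the limiting $_2\phi_1$ with $Q=q^{-2}$, $n=m-1$, $C=q^{-2}$, $Z=q^{-2m-2}$ all check out, with the exponent collapsing to $-s^2-s-2sm-m$ as you claim.
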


We use Proposition~\ref{limitterm2phi1trans} to give alternative sums for $a2$ and $b2$.

\begin{prop} 
\label{alta2}
We have
$$
\sum_{k=1}^m a2_k =  \frac{1}{q^{m}(1+q)}\sum_{i=1}^m 
\frac{(-1)^{i-1}q^{-i(i+1)} (1-q^{2i})}{(1/q^2;1/q^2)_{m-i}}.
$$
\end{prop}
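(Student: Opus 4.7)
The plan is to start from the basic hypergeometric expression for $\sum_{k=1}^m a2_k$ provided by Proposition~\ref{a2} and apply the transformation of Proposition~\ref{limitterm2phi1trans} in its second form (the one ending in $Q^k(CQ^{n-1})^k$). Reading off parameters, take $Q=q^{-2}$, $Q^{-n}=q^{2m-2}$ (so $n=m-1$), $C=q^{-2}$, and $Z=q^{-2m-4}$. Quick arithmetic gives $ZQ^{-n}/C=q^{-4}$ and $Q^k(CQ^{n-1})^k=q^{-2mk}$, converting the limit into a finite sum whose summand involves the symbols $(q^{2m-2};q^{-2})_k$ and $(q^{-4};q^{-2})_k$.

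Two simplifications do the work. First, the identity $(q^{-4};q^{-2})_k = (q^{-2};q^{-2})_{k+1}/(1-q^{-2})$ allows the new Pochhammer symbol to cancel against $(q^{-2};q^{-2})_k$ in the denominator and leaves only a factor $(1-q^{-2(k+1)})$; after reindexing $i=k+1$, the sum runs from $i=1$ to $m$ with $(1-q^{-2i})$ in the summand. Second, applying $(1-q^{2l}) = -q^{2l}(1-q^{-2l})$ to each of the $i-1$ factors of $(q^{2m-2};q^{-2})_{i-1}$ converts it into $(-1)^{i-1} q^{(i-1)(2m-i)} (q^{-2};q^{-2})_{m-1}/(q^{-2};q^{-2})_{m-i}$; the numerator then cancels against the prefactor $1/(q^{-2};q^{-2})_{m-1}$ supplied by Proposition~\ref{limitterm2phi1trans}, and the desired denominator $(q^{-2};q^{-2})_{m-i}$ emerges.

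After these cancellations the summand is already almost of the target shape. Consolidating exponents via $(i-1)(2m-i) - 2m(i-1) = -i(i-1)$ and using $(1-q^{-2i}) = -q^{-2i}(1-q^{2i})$ produces precisely $(-1)^{i-1} q^{-i(i+1)}(1-q^{2i})/(q^{-2};q^{-2})_{m-i}$ inside the sum, up to an overall scalar. That scalar is $q^{-m-2}(1-q)$ (from Proposition~\ref{a2}) times $-1/(1-q^{-2})$ (absorbing the two sign flips produced above); writing $1-q^{-2} = (q^2-1)/q^2$ and using $1-q = -(q-1)$, this collapses to $1/(q^m(q+1))$, matching the right-hand side of Proposition~\ref{alta2} exactly.

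The only real obstacle is the careful bookkeeping of signs and $q$-powers through the successive rewrites; no conceptual input beyond the hypergeometric transformation of Proposition~\ref{limitterm2phi1trans} is required, and this same strategy is clearly the template for the analogous evaluation of $\sum b2_k$ in Proposition~\ref{b2} and for the corresponding results needed in the later sections.
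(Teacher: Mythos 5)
Your proposal is correct and follows essentially the same route as the paper: the same parameter choice $Q=q^{-2}$, $n=m-1$, $C=q^{-2}$, $Z=q^{-2m-4}$ in Proposition~\ref{limitterm2phi1trans} applied to Proposition~\ref{a2}, followed by the same Pochhammer simplifications, reindexing $i=k+1$, and collection of signs and $q$-powers into the prefactor $1/(q^m(q+1))$. All the intermediate identities you state (e.g.\ $ZQ^{-n}/C=q^{-4}$, the exponent $(i-1)(2m-i)-2m(i-1)=-i(i-1)$, and the final scalar computation) check out.
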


\begin{proof} 
Choose 
$$Q=q^{-2}, \quad n=m-1, \quad C=q^{-2}\quad Z=q^{-2m-4} $$
in Proposition~\ref{limitterm2phi1trans}, and apply it to Proposition~\ref{a2}.
$$
\begin{aligned}
\sum_{k=1}^m a2_k &= \frac{q^{-m-2}(1-q) }{(q^{-2};q^{-2})_{m-1}}
 \ _2\phi_1(q^{2m-2},q^{-4};0;q^{-2}, q^{-2m})\\
 &= 
 \frac{q^{-m-2}(1-q) }{(q^{-2};q^{-2})_{m-1}}
\sum_{s=0}^{m-1} \frac{(q^{2m-2};q^{-2})_s (q^{-4};q^{-2})_s}
{(q^{-2};q^{-2})_s} q^{-2ms}\\
&= 
 \frac{q^{-m-2}(1-q) }{(q^{-2};q^{-2})_{m-1}}
\sum_{s=0}^{m-1} \frac{(q^{2m-2};q^{-2})_s (1-q^{-2s-2})}
{(1-q^{-2})} q^{-2ms}\\
&= 
q^{-m-2}(1-q) 
\sum_{s=0}^{m-1} \frac{(q^{2-2m};q^{2})_s (-1)^s q^{(2m-2)s-s(s-1)}}{(q^{-2};q^{-2})_{m-1}} 
\frac{(1-q^{2s+2})}
{(1-q^{2})} q^{-2ms-2s}\\
&= 
\frac{q^{-m-2}}{1+q} 
\sum_{s=0}^{m-1} \frac{(-1)^s}{(q^{-2};q^{-2})_{m-1-s}} 
(1-q^{2s+2}) q^{-s^2-3s}\\
&= 
\frac{q^{-m}}{1+q} 
\sum_{i=1}^{m} \frac{(-1)^{i-1}}{(q^{-2};q^{-2})_{m-i}} 
(1-q^{2i}) q^{-i^2-i}.\\
\end{aligned}
$$

\end{proof} 

\begin{prop} 
\label{altb2}
We have
$$
\sum_{k=1}^m b2_k = q^{-m}\sum_{i=1}^m 
\frac{(-1)^{i-1}q^{-i(i+1)}}{(1/q^2;1/q^2)_{m-i}} q^{2i}.
$$
\end{prop}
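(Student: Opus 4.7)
The plan is to mirror the argument of Proposition~\ref{alta2}, starting from the $_2\phi_1$ form in Proposition~\ref{b2} and applying the transformation in Proposition~\ref{limitterm2phi1trans}. Specifically, I would choose
$$Q=q^{-2},\quad n=m-1,\quad C=q^{-2},\quad Z=q^{-2m-2},$$
so that $Q^{-n}=q^{2m-2}$ matches the first argument of the $_2\phi_1$. Using the second equality of Proposition~\ref{limitterm2phi1trans} this transforms
$$\sum_{k=1}^m b2_k \;=\; \frac{q^{-m}}{(q^{-2};q^{-2})_{m-1}}\sum_{s=0}^{m-1}\frac{(q^{2m-2};q^{-2})_s\,(ZQ^{-n}/C;q^{-2})_s}{(q^{-2};q^{-2})_s}\,(q^{-2})^s\,(CQ^{n-1})^s.$$

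The key observation, and the reason the $b2$ sum ends up simpler than the $a2$ sum, is the parameter coincidence
$$ZQ^{-n}/C \;=\; q^{-2m-2}\cdot q^{2m-2}\cdot q^{2} \;=\; q^{-2},$$
so that $(ZQ^{-n}/C;q^{-2})_s=(q^{-2};q^{-2})_s$ cancels the $(q^{-2};q^{-2})_s$ in the denominator. Likewise $CQ^{n-1}=q^{-2m+2}$, leaving a sum containing only a single $q$-shifted factorial, namely $(q^{2m-2};q^{-2})_s$. I would then convert this to ascending base using the standard identity
$$(q^{2m-2};q^{-2})_s \;=\; (-1)^s\,q^{(2m-2)s-s(s-1)}\,\frac{(q^{-2};q^{-2})_{m-1}}{(q^{-2};q^{-2})_{m-1-s}},$$
which precisely cancels the $1/(q^{-2};q^{-2})_{m-1}$ prefactor.

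After this cancellation the remaining exponent of $q$ collects to $-s^2-s$ (the $(2m-2)s$ from the reversed factorial exactly cancels the $(CQ^{n-1})^s=q^{-(2m-2)s}$, and the $q^{-2s}$ combines with $q^{-s(s-1)}$), yielding
$$\sum_{k=1}^m b2_k \;=\; q^{-m}\sum_{s=0}^{m-1}\frac{(-1)^s\,q^{-s^2-s}}{(q^{-2};q^{-2})_{m-1-s}}.$$
Finally I would reindex by $i=s+1$ and note that $-(i-1)^2-(i-1)=-i(i+1)+2i$ to match the stated form. The work is entirely a bookkeeping calculation; the only step requiring any care is lining up the four parameters in Proposition~\ref{limitterm2phi1trans} so that the cancellation $(ZQ^{-n}/C;Q)_s=(Q;Q)_s$ occurs, since that is what reduces the $_2\phi_1$ to a $_1\phi_0$-like sum and makes the final identity fall out without needing further manipulation of a $(1-q^{2i})$ factor as in Proposition~\ref{alta2}.
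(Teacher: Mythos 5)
Your proposal is correct and follows essentially the same route as the paper: the same parameter choice $Q=q^{-2}$, $n=m-1$, $C=q^{-2}$, $Z=q^{-2m-2}$ in Proposition~\ref{limitterm2phi1trans}, the same cancellation $(ZQ^{-n}/C;Q)_s=(Q;Q)_s$, and the same base-reversal and reindexing $i=s+1$ to reach $q^{-m}\sum_{i=1}^m \frac{(-1)^{i-1}q^{-i^2+i}}{(q^{-2};q^{-2})_{m-i}}$. All the exponent bookkeeping checks out, so nothing further is needed.
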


\begin{proof} 
Choose 
$$
Q=q^{-2}, \quad n=m-1, \quad C=q^{-2}\quad Z=q^{-2m-2} 
$$
in Proposition~\ref{limitterm2phi1trans} and apply it to Proposition~\ref{b2}.

$$
\begin{aligned}
\sum_{k=1}^m b2_k &= \frac{q^{-m} }{(q^{-2};q^{-2})_{m-1}}
 \ _2\phi_1(q^{2m-2},q^{-2};0;q^{-2}, q^{-2m})\\
 &= 
 \frac{q^{-m}}{(q^{-2};q^{-2})_{m-1}}
\sum_{s=0}^{m-1} (q^{2m-2};q^{-2})_s 
 q^{-2ms}\\
&= 
q^{-m} 
\sum_{s=0}^{m-1} \frac{(q^{2-2m};q^{2})_s (-1)^s q^{(2m-2)s-s(s-1)}}{(q^{-2};q^{-2})_{m-1}} 
q^{-2ms}\\
&= 
q^{-m}
\sum_{s=0}^{m-1} \frac{(-1)^s}{(q^{-2};q^{-2})_{m-1-s}} 
q^{-s^2-s}\\
&= 
q^{-m}
\sum_{i=1}^{m} \frac{(-1)^{i-1}}{(q^{-2};q^{-2})_{m-i}} 
 q^{-i^2+i}.\\
\end{aligned}
$$
\end{proof} 

Finally, we can complete the proof of Conjecture \eqref{anz1}: from
Propositions~\ref{alta2} and \ref{altb2} we have
$$
\begin{aligned}
 \sum_{k=1}^m (a_k+b_k)= \sum_{k=1}^m (a2_k+b2_k)&= q^{-m}
\sum_{i=1}^m 
\frac{(-1)^{i-1}q^{-i(i+1)}}{(1/q^2;1/q^2)_{m-i}} 
\biggl( \frac{1-q^{2i}}{1+q}+q^{2i}\biggr)\\
&= \frac{q^{-m}}{1+q}
\sum_{i=1}^m 
\frac{(-1)^{i-1}q^{-i(i+1)}}{(1/q^2;1/q^2)_{m-i}} 
(q^{2i+1}+1).
\end{aligned}
$$.

\section{Anzanello's second conjecture} \label{Oproof1}

The paper \cite{F1} (see also Section 5 of \cite{F2} which uses notation similar to that of Anzanello) defined and studied
the following probability distribution on integer partitions for which the even parts occur with even multiplicity:

\[ P_O(\lambda) =  \frac{\prod_{i \geq 1} (1-u^2/q^{2i-1})}{(1+u)}  \frac{u^{|\lambda|}}{q^{\frac{1}{2} \sum_i (\lambda_i')^2 - \frac{o(\lambda)}{2}}  
\prod_i (1/q^2;1/q^2)_{\lfloor \frac{m_i(\lambda)}{2} \rfloor}}.\] 

In order to give an algorithm for generating partitions distributed according to $P_O$, it was proved in \cite{F1} that
\[ \sum_{\lambda_1'=2k-1} P_O(\lambda) = \frac{\prod_{i \geq 1} (1-u^2/q^{2i-1})}{(1+u)}  \frac{u^{2k-1}}{q^{2k^2-3k+1} (1/q^2;1/q^2)_{k-1} 
(u^2/q;1/q^2)_k}.\]

It follows that

\begin{eqnarray*}
& & \sum_{|\lambda|=2m+1, \lambda_1'=2k-1 \atop i \ even \implies m_i(\lambda) \ even} \frac{1}{q^{\frac{1}{2} \sum_i (\lambda_i')^2 - \frac{o(\lambda)}{2}}  
\prod_i (1/q^2;1/q^2)_{\lfloor \frac{m_i(\lambda)}{2} \rfloor}} \\
& = & \frac{1}{q^{2k^2-3k+1} (1/q^2;1/q^2)_{k-1}} \times \rm{Coeff. \ } u^{2m+1} \rm{\ in \ }  \frac{u^{2k-1}}{(u^2/q;1/q^2)_k} \\
& = & \frac{1}{q^{2k^2-3k+1} (1/q^2;1/q^2)_{k-1}} \times \rm{Coeff. \ } u^{m-k+1} \rm{\ in \ }  \frac{1}{(u/q;1/q^2)_k}.
\end{eqnarray*}

Note that if $|\lambda|$ is odd and $P_O(\lambda) \neq 0$, then since the even parts of $\lambda$ occur with even multiplicity, $\lambda_1'$
must be odd. Thus the left side of Conjecture \eqref{anz2} is equal to $\sum_{k=1}^{m+1} c_k$, where
\[ c_k = \left( 1 - \frac{1}{q^{2k-1}} \right) \frac{1}{q^{2k^2-3k+1} (1/q^2;1/q^2)_{k-1}} \times \rm{Coeff. \ } u^{m-k+1} \rm{\ in \ } 
\frac{1}{(u/q;1/q^2)_k}.\]

Thus to prove Conjecture \eqref{anz2} it is enough to show that
\begin{equation} \label{eq5} 
\sum_{k=1}^{m+1} c_k =  \frac{1}{q^m (1/q^2;1/q^2)_m} + \frac{1}{q^{m+1}} \sum_{i=0}^m \frac{(-1)^{i-1}}{q^{i(i+1)} (1/q^2;1/q^2)_{m-i}}.
\end{equation}

To prove \eqref{eq5}, we first write $c_k$ as the sum of two terms $c1_k + c2_k$. 

\begin{definition} Fix $m\ge 1.$ For $1\le k\le m$ let
$$
\begin{aligned}
c_k&= \bigl(1-1/q^{2k-1}\bigr)\frac{q^{-2k^2+3k-1}}{ (q^{-2};q^{-2})_{k-1}}\frac{(q^{-2k};q^{-2})_{m-k+1}}{(q^{-2};q^{-2})_{m-k+1}} q^{-1-m+k}\\
c_k&=c1_k+c2_k,
\end{aligned}
$$
where
$$
\begin{aligned}
c1_k&= \frac{q^{-2k^2+3k-1}}{ (q^{-2};q^{-2})_{k-1}}\frac{(q^{-2k};q^{-2})_{m-k+1}}{(q^{-2};q^{-2})_{m-k+1}} q^{-1-m+k},\\
c2_k&=-q^{1-2k} \frac{q^{-2k^2+3k-1}}{ (q^{-2};q^{-2})_{k-1}}\frac{(q^{-2k};q^{-2})_{m-k+1}}{(q^{-2};q^{-2})_{m-k+1}} q^{-1-m+k}.
\end{aligned}
$$
\end{definition} 

\begin{prop} 
\label{part01}
We have 
$$
\sum_{k=1}^{m+1} c2_k 
= q^{-m-1}\sum_{i=0}^{m} 
\frac{(-1)^{i-1}q^{-i(i+1)}}{(1/q^2;1/q^2)_{m-i}}
$$
which is the second term on the right side of \eqref{anz2}.
\end{prop}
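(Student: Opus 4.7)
\smallskip

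The plan is to mimic the two-step strategy used for $\sum_{k=1}^m a2_k$ (Propositions~\ref{a2} and \ref{alta2}): first rewrite $\sum_{k=1}^{m+1} c2_k$ as a limit of a terminating $_2\phi_1$, then apply Proposition~\ref{limitterm2phi1trans} to collapse it to the target sum. The main obstacle will be purely bookkeeping of $q$-exponents and signs, as every structural idea is already present in the $a2,b2$ analysis; the only substantive change is that the sum runs up to $m+1$ (so the $_2\phi_1$ parameter $n$ is $m$ rather than $m-1$), and the variable $Z$ is different.

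First I would re-index by $s=k-1$ and simplify. Collecting $q$-exponents in the definition gives
$$
c2_{s+1} \;=\; -\,\frac{q^{-2s^2-2s-1-m}\,(q^{-2s-2};q^{-2})_{m-s}}{(q^{-2};q^{-2})_s\,(q^{-2};q^{-2})_{m-s}}.
$$
Then the telescoping identity $(q^{-2s-2};q^{-2})_{m-s}=(q^{-2};q^{-2})_m/(q^{-2};q^{-2})_s$, followed by the sign/power conversion
$$
\frac{(q^{-2};q^{-2})_m}{(q^{-2};q^{-2})_{m-s}} \;=\; (-1)^s\,q^{-2ms+s(s-1)}\,(q^{2m};q^{-2})_s,
$$
recasts $c2_{s+1}$ so that summing yields
$$
\sum_{k=1}^{m+1} c2_k \;=\; -q^{-m-1}\sum_{s=0}^{m}\frac{(q^{2m};q^{-2})_s\,(-1)^s\,q^{-s(s-1)}}{(q^{-2};q^{-2})_s\,(q^{-2};q^{-2})_s}\,(q^{-2m-4})^s,
$$
which matches the first form of the limit in Proposition~\ref{limitterm2phi1trans} with $Q=q^{-2},\ n=m,\ C=q^{-2},\ Z=q^{-2m-4}$. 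Hence
$$
\sum_{k=1}^{m+1} c2_k \;=\; -q^{-m-1}\,\lim_{B\to\infty}\,{}_2\phi_1(q^{2m},B;q^{-2};q^{-2},q^{-2m-4}/B).
$$

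Second, I would apply the second (transformed) form of Proposition~\ref{limitterm2phi1trans} with these parameters. The key simplification is that $ZQ^{-n}/C=q^{-2m-4}\cdot q^{2m}\cdot q^{2}=q^{-2}$, so the factor $(ZQ^{-n}/C;Q)_k=(q^{-2};q^{-2})_k$ cancels $(Q;Q)_k$ and the inner sum collapses to
$$
\frac{1}{(q^{-2};q^{-2})_m}\sum_{k=0}^{m}(q^{2m};q^{-2})_k\,q^{-2k-2mk}.
$$
Using the same sign/power identity in reverse, $(q^{2m};q^{-2})_k=(-1)^k q^{2mk-k(k-1)}(q^{-2};q^{-2})_m/(q^{-2};q^{-2})_{m-k}$, the summand telescopes to $(-1)^k q^{-k(k+1)}/(q^{-2};q^{-2})_{m-k}$. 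Multiplying by the outer $-q^{-m-1}$ and noting $-(-1)^k=(-1)^{k-1}$ produces exactly $q^{-m-1}\sum_{i=0}^{m}(-1)^{i-1}q^{-i(i+1)}/(1/q^2;1/q^2)_{m-i}$, which is the second term on the right side of \eqref{anz2} as claimed. The only genuine risk in the plan is losing track of a sign or a power of $q$ in the two conversions between $(q^{2m};q^{-2})_s$ and $(q^{-2};q^{-2})_m/(q^{-2};q^{-2})_{m-s}$; everything else is a direct specialization parallel to Proposition~\ref{alta2}.
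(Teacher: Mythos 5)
Your proof is correct, but it takes a longer route than the paper. The paper's entire proof is the observation that $c2_k = -\,b2_k$ with $m$ replaced by $m+1$ (compare the exponents: both reduce to $q^{-2k^2+2k-m-1}$ times the same $q$-Pochhammer ratio), so one simply cites Proposition~\ref{altb2} with $m\to m+1$, negates, and shifts the summation index $i\to i-1$ using $q^{-i(i+1)}q^{2i}=q^{-i(i-1)}$. What you do instead is re-run the whole Proposition~\ref{b2}/\ref{altb2} machinery from scratch at the shifted parameters: your ${}_2\phi_1(q^{2m},B;q^{-2};q^{-2},q^{-2m-4}/B)$ with $n=m$ is exactly the $b2$ series of Proposition~\ref{b2} after $m\to m+1$, and your application of the second form of Proposition~\ref{limitterm2phi1trans} (with the collapse coming from $ZQ^{-n}/C=q^{-2}$ cancelling $(Q;Q)_k$) reproduces the computation in the proof of Proposition~\ref{altb2}. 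I checked your exponent bookkeeping --- the reduction of $c2_{s+1}$ to $-q^{-2s^2-2s-1-m}(q^{-2s-2};q^{-2})_{m-s}/\bigl((q^{-2};q^{-2})_s(q^{-2};q^{-2})_{m-s}\bigr)$, the two sign/power conversions, and the final summand $(-1)^kq^{-k(k+1)}/(q^{-2};q^{-2})_{m-k}$ --- and it is all accurate, so the argument stands on its own. What the paper's approach buys is brevity and no new opportunities for sign or exponent slips; what yours buys is a self-contained verification that does not depend on spotting the substitution $m\to m+1$. It would strengthen your write-up to note the identification $c2_k=-b2_k(m\to m+1)$ explicitly, since it explains why your computation is guaranteed to succeed.
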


\begin{proof} Since $c2_k=-b2_k(m\to m+1)$, we have 
by Proposition~\ref{altb2}
$$
\begin{aligned}
\sum_{k=1}^{m+1} c2_k &= -  q^{-m-1}\sum_{i=1}^{m+1} 
\frac{(-1)^{i-1}q^{-i(i+1)}}{(1/q^2;1/q^2)_{m+1-i}} q^{2i}\\
&= q^{-m-1}\sum_{i=0}^{m} 
\frac{(-1)^{i-1}q^{-i(i+1)}}{(1/q^2;1/q^2)_{m-i}}.
\end{aligned}
$$
\end{proof}

\begin{prop} \label{part02} We have
$$
\sum_{k=1}^{m+1} c1_k=  \frac{1}{q^m (q^{-2};q^{-2})_m}.
$$
\end{prop}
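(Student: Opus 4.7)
The plan is to mirror the hypergeometric treatments of $\sum a2_{k}$ and $\sum b2_{k}$ but, in this case, to invoke the $q$-Chu--Vandermonde evaluation (Proposition~\ref{qchu}) directly rather than Proposition~\ref{limitterm2phi1trans}. Set $Q = q^{-2}$ and reindex with $j = k-1$, so that $j$ runs over $0,\ldots,m$. A short simplification using $-2k^{2}+3k-1 = -2j^{2}-j$, $-1-m+k = j-m$, together with the Pochhammer telescoping $(q^{-2k};q^{-2})_{m-k+1} = (Q^{j+1};Q)_{m-j} = (Q;Q)_{m}/(Q;Q)_{j}$, collapses the defining expression of $c1_{k}$ to
\[
c1_{j+1} \;=\; q^{-m}\,\frac{Q^{j^{2}}\,(Q;Q)_{m}}{(Q;Q)_{j}^{2}\,(Q;Q)_{m-j}}.
\]

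Next I would apply the standard identity $(Q;Q)_{m}/(Q;Q)_{m-j} = (-1)^{j}Q^{mj-\binom{j}{2}}(Q^{-m};Q)_{j}$, combined with the elementary rewrite $j^{2}+mj-\binom{j}{2} = \binom{j}{2}+(m+1)j$, to cast the sum in hypergeometric form:
\[
\sum_{k=1}^{m+1} c1_{k} \;=\; q^{-m}\sum_{j=0}^{m}\frac{(Q^{-m};Q)_{j}}{(Q;Q)_{j}^{2}}\,(-1)^{j}\,Q^{\binom{j}{2}+(m+1)j} \;=\; q^{-m}\lim_{B\to\infty}{}_{2}\phi_{1}\!\bigl(Q^{-m},B;Q;Q,Q^{m+1}/B\bigr).
\]
Here the choice $C = Q$ is dictated by the $(Q;Q)_{j}^{2}$ in the denominator, and the argument $Q^{m+1}/B = CQ^{m}/B$ is precisely the one required by Proposition~\ref{qchu}. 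That proposition evaluates the $_{2}\phi_{1}$ as $(Q/B;Q)_{m}/(Q;Q)_{m}$, whose limit as $B\to\infty$ is $1/(Q;Q)_{m}$. Multiplying by $q^{-m}$ yields $1/\bigl(q^{m}(q^{-2};q^{-2})_{m}\bigr)$, as desired.

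The main obstacle is essentially bookkeeping: one must recognise, after $j = k-1$, that the Pochhammer ratio $(q^{-2k};q^{-2})_{m-k+1}/(q^{-2};q^{-2})_{k-1}$ supplies exactly the second factor of $(Q;Q)_{j}$ in the denominator needed to fit the $_{2}\phi_{1}$ template with $C = Q$. The pleasant surprise compared to the proofs of Propositions~\ref{alta2} and~\ref{altb2} is that here the parameters align so that $Z = CQ^{m}/B$, so no preliminary transformation is required and the $q$-Chu--Vandermonde sum finishes the computation in one step. As an alternative, one could apply Proposition~\ref{limitterm2phi1trans} with $Z = Q^{m+1}$, in which case the factor $(ZQ^{-m}/C;Q)_{k} = (1;Q)_{k}$ annihilates every term with $k\ge 1$, again delivering the value $1/(Q;Q)_{m}$.
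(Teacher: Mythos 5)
Your proposal is correct and follows essentially the same route as the paper: after reindexing, both arguments recognize the sum as $q^{-m}\lim_{B\to\infty}{}_2\phi_1(Q^{-m},B;Q;Q,Q^{m+1}/B)$ with $Q=q^{-2}$ and evaluate it by the $q$-Chu--Vandermonde formula of Proposition~\ref{qchu}, whose limit gives $1/(Q;Q)_m$. Your closing observation that Proposition~\ref{limitterm2phi1trans} would also work because $(1;Q)_k$ kills all terms with $k\ge 1$ is a nice consistency check but adds nothing beyond the paper's argument.
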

\begin{proof} We have
$$
\begin{aligned}
\sum_{k=1}^{m+1} c1_k &= q^{-m}  \sum_{k=0}^m 
\frac{(q^{-2m};q^2)_k}{(q^{-2};q^{-2})_k^2} q^{-2k^2}\\
&=q^{-m} \sum_{k=0}^m \frac{(q^{2m};q^{-2})_k}{(q^{-2};q^{-2})_k^2}(-1)^k q^{-k(k-1)-2mk-2k}\\
&= q^{-m}\lim_{B\to\infty} \ _2\phi_1(q^{2m},B;q^{-2};q^{-2},q^{-2m-2}/B)\\
&= q^{-m} \lim_{B\to\infty} \frac{(q^{-2}/B;q^{-2})_m}{(q^{-2};q^{-2})_m}\\
&= \frac{1}{q^m(q^{-2};q^{-2})_m}.
\end{aligned}
$$
where in the penultimate step, we used Proposition~\ref{qchu}.
\end{proof}

Combining Propositions \ref{part01} and \ref{part02} completes the proof of \eqref{eq5}, and so of Conjecture
\eqref{anz2}.

\section{Anzanello's third conjecture} \label{Oproof2}

The proof of Conjecture \eqref{anz3} is not so difficult given our earlier results. We use the probability distribution on
$P_O$ defined in Section \ref{Oproof1}. It was proved in \cite{F1} that

\[ \sum_{\lambda_1'=2k} P_O(\lambda) = \frac{\prod_{i \geq 1} (1-u^2/q^{2i-1})}{(1+u)} \frac{u^{2k}}{q^{2k^2-k} (1/q^2;1/q^2)_k
(u^2/q;1/q^2)_k}.\]

It follows that 

\begin{eqnarray*}
& & \sum_{|\lambda|=2m, \lambda_1'=2k \atop i \ even \implies m_i(\lambda) \ even} \frac{1}{q^{\frac{1}{2} \sum_i (\lambda_i')^2 - \frac{o(\lambda)}{2}}  
\prod_i (1/q^2;1/q^2)_{\lfloor \frac{m_i(\lambda)}{2} \rfloor}} \\
& = & \frac{1}{q^{2k^2-k} (1/q^2;1/q^2)_{k}} \times \rm{Coeff. \ } u^{2m} \rm{\ in \ }  \frac{u^{2k}}{(u^2/q;1/q^2)_k} \\
& = & \frac{1}{q^{2k^2-k} (1/q^2;1/q^2)_{k}} \times \rm{Coeff. \ } u^{m-k} \rm{\ in \ }  \frac{1}{(u/q;1/q^2)_k}.
\end{eqnarray*} 

Note that if $|\lambda|$ is even and $P_O(\lambda) \neq 0$, then since the even parts of $\lambda$ occur with even multiplicity, $\lambda_1'$ must
be even. Hence the left side of Conjecture \eqref{anz3} is equal to $\sum_{k=1}^m d_k$, where
\[ d_k = \frac{1}{q^{2k^2-k} (1/q^2;1/q^2)_{k-1}} \times \rm{Coeff. \ } u^{m-k} \rm{\ in \ } \frac{1}{(u/q;1/q^2)_k}.\] Since
$d_k = b2_k$, it follows from Proposition \ref{altb2} that that \[ \sum_{k=1}^m d_k = \frac{1}{q^m} \sum_{i=1}^m \frac{(-1)^{i-1}}{q^{i(i-1)} (1/q^2;1/q^2)_{m-i}},\] completing the proof of Conjecture \eqref{anz3}.

\section{Acknowledgements} Fulman was supported by Simons Foundation grant 917224.

\end{document}